\theoremstyle{plain}
\newtheorem*{acknowledgement}{Acknowledgement}
\newtheorem{corollary}{\bf Corollary}
\newtheorem{definition}{\bf Definition}
\newtheorem{lemma}{\bf Lemma}
\newtheorem{proposition}{\bf Proposition}
\newtheorem{remark}{Remark}
\newtheorem{theorem}{\bf Theorem}
\newcommand{\ric}{\mathring{Ric}}
\newcommand{\hess}{\mathring{Hessf}}
\newcommand{\rc}{\mathring{R}}
\theoremstyle{definition}
\newtheorem{example}{\bf Example}
\numberwithin{equation}{section}
\begin{document}

\title[Critical metrics of the volume functional]{Critical metrics of the volume functional on compact three-manifolds with smooth boundary}

\author{R. Batista}
\author{R. Di\'ogenes}
\author{M. Ranieri}
\author{E. Ribeiro Jr}

\address[R. Batista]{Universidade Federal do Piau\'{\i} -UFPI, Departamento de Matem\'{a}tica, 64049-550 Te\-re\-si\-na - PI, Brazil.}
\email{rmarcolino@ufpi.edu.br}

\address[R. Di\'ogenes]{Universidade da Integra\c c\~ao Internacional da Lusofonia Afro-Brasileira - UNILAB, Instituto de Ci\^encias Exatas e da Natureza, 62785-000 - Acarape - CE, Brazil}
\email{rafaeldiogenes@unilab.edu.br}

\address[M. Ranieri]{Universidade Federal do Cear\'a - UFC, Departamento  de Matem\'atica, Campus do Pici, Av. Humberto Monte, Bloco 914,
60455-760, Fortaleza - CE, Brazil}
\email{ranieri2011@gmail.com}

\address[E. Ribeiro Jr]{Universidade Federal do Cear\'a - UFC, Departamento  de Matem\'atica, Campus do Pici, Av. Humberto Monte, Bloco 914,
60455-760, Fortaleza - CE, Brazil.}
\email{ernani@mat.ufc.br}
\thanks{E. Ribeiro and R. Batista were partially supported by CNPq / Brazil}
\thanks{M. Ranieri was partially supported by CAPES / Brazil}
\subjclass[2010]{Primary 53C25, 53C20, 53C21; Secondary 53C65}
\keywords{Volume functional; critical metrics; compact manifolds; boundary}
\date{March 12, 2016}

\newcommand{\spacing}[1]{\renewcommand{\baselinestretch}{#1}\large\normalsize}
\spacing{1.3}

\begin{abstract}
We study the space of smooth Riemannian structures on compact three-manifolds with boundary that satisfies a critical point equation associated with a boundary value problem, for simplicity, Miao-Tam critical metrics. We provide an estimate to the area of the boundary of Miao-Tam critical metrics on compact three-manifolds. In addition, we obtain a B\"ochner type formula which enables us to show that a Miao-Tam critical metric on a compact three-manifold with positive scalar curvature must be isometric to a geodesic ball in $\Bbb{S}^3.$
\end{abstract}

\maketitle

\section{Introduction}
\label{intro}
 In the last decades very much attention has been given to characterizing critical metrics of the Riemannian functionals, as for instance, the total scalar curvature functional and the volume functional. Einstein and Hilbert proved that the critical points of the total scalar curvature functional restricted to the set of smooth Riemannian structures on $M^n$ of unitary volume are Einstein (cf. Theorem 4.21 in \cite{besse}). It permits to prove the existence, on a given compact manifold, of many metrics with constant scalar curvature. This stimulated many interesting works. In this spirit, Miao and Tam \cite{miaotam,miaotamTAMS}, motivated also by a result obtained in \cite{fan}, studied variational properties of the volume functional constrained to the space of metrics of constant scalar curvature on a given compact manifold with boundary.  Some results obtained in \cite{miaotam} suggest that critical metrics with a prescribed boundary metric
seem to be rather rigid.

Following the terminology used in \cite{BDR} we recall the definition of Miao-tam critical metrics.

\begin{definition}
\label{def1} A Miao-Tam critical metric is a 3-tuple $(M^n,\,g,\,f),$ where $(M^{n},\,g)$ is a compact Riemannian manifold of dimension at least three with a smooth boundary $\partial M$ and $f: M^{n}\to \Bbb{R}$ is a smooth function such that $f^{-1}(0)=\partial M$ satisfying the overdetermined-elliptic system
\begin{equation}
\label{eqMiaoTam} \mathfrak{L}_{g}^{*}(f)=g.
\end{equation} Here, $\mathfrak{L}_{g}^{*}$ is the formal $L^{2}$-adjoint of the linearization of the scalar curvature operator $\mathfrak{L}_{g}$. Such a function $f$ is called a potential function.
\end{definition}

It is worthwhile to remark that $$\mathfrak{L}_{g}^{*}(f)=-(\Delta f)g+Hess f-fRic,$$ where $Ric,$ $\Delta$ and $Hess $ stand, respectively, for the Ricci tensor, the Laplacian operator and the Hessian form on $M^n;$ for more details see, for instance, \cite{besse} (cf. Eq. (1.183), p. 64). Therefore, (\ref{eqMiaoTam}) can be rewritten as
\begin{equation}\label{eqMiaoTam1}
-(\Delta f)g+Hess f-fRic=g.
\end{equation}

Miao and Tam \cite{miaotam} were able to show that these critical metrics arise as critical points of the volume functional on $M^n$ when restricted to the class of metrics $g$ with prescribed constant scalar curvature such that $g_{|_{T \partial M}}=h$ for a prescribed Riemannian metric $h$  on the boundary. Afterward, Corvino, Eichmair and Miao \cite{CEM} studied the modified problem of finding stationary points for the volume functional on the space of metrics whose scalar curvature is equal to a given constant. From Theorem 7 in \cite{miaotam}, connected Riemannian manifolds satisfying (\ref{eqMiaoTam1}) have constant scalar curvature $R.$ Some explicit examples of Miao-Tam critical metrics can be found in \cite{miaotam}. Those examples were obtained on connected domain with compact closure in $\Bbb{R}^n,$ $\Bbb{H}^n$ and $\Bbb{S}^n.$ For the sake of completeness, it is very important to underline ones. 

Let us start with the Euclidean space $(\Bbb{R}^{n},\,g),$ where $g$ is its canonical metric. 

\begin{example}[\cite{miaotam}]
We consider $\Omega$ to be a Euclidean ball in $\Bbb{R}^n$ of radius $r.$ Suppose that $$f=-\frac{1}{2(n-1)}|x|^{2}+\frac{1}{2(n-1)}r^{2},$$ where $x\in \Bbb{R}^n.$ Under these conditions, it is not hard to check that $(\Omega,\,g,\,f)$ is a Miao-Tam critical metric. 
\end{example}

At the same time, we present a similar example as before on the standard sphere $(\Bbb{S}^{n},\,g_{0}),$ where $g_{0}$ is its canonical metric. 

\begin{example}[\cite{miaotam}]
Let $\Omega$ be a geodesic ball in $\Bbb{S}^n\subset\mathbb{R}^{n+1}$ with radius $r_{0}\neq \frac{\pi}{2}.$ Suppose that $$f=\frac{1}{n-1}\Big(\frac{\cos r}{\cos r_{0}}-1\Big),$$ where $r$ is the geodesic distance from the point $(0,...,0,1).$ Therefore, $f=0$ on the boundary of $\Omega$ and $f$ satisfies (\ref{eqMiaoTam1}). Moreover, if $\Omega$ is contained in a hemisphere, then $(\Omega,\,g_{0},\,f)$ is also a Miao-Tam critical metric. 
\end{example}

Reasoning as in the spherical case it is not difficult to build a similar example on the hyperbolic space $\Bbb{H}^n.$ 

\begin{example}[\cite{miaotam}]
Regarding the hyperbolic space $\Bbb{H}^n$ embed in $\Bbb{R}^{n,1},$ the Minkowski space, with standard metric $dx_{1}^{2}+...+dx_{n}^{2}-dt^{2}$ such that $$\Bbb{H}^{n}=\{(x_{1},...,x_{n},t)\,|\, x_{1}^{2}+...+x_{n}^{2}-t^{2}=-1,\,t>0\}.$$ We assume that $\Omega$ is a geodesic ball in $\Bbb{H}^n$ with center at $(0,..., 0,1)$ and geodesic radius $r_{0}$. Suppose that $$f=\frac{1}{n-1}\Big(1-\frac{\cosh r}{\cosh r_{0}}\Big),$$ where $r$ is the geodesic distance form the point $(0,...,0,1).$ Similarly, we have that $f=0$ on the boundary of $\Omega$ and $f$ satisfies (\ref{eqMiaoTam1}). For more details see \cite{miaotam}.
\end{example}

It is natural to ask whether these examples are the only Miao-Tam critical metrics. In this sense, inspired by ideas developed by Kobayashi \cite{kobayashi}, Kobayashi and Obata \cite{obata}, Miao and Tam \cite{miaotamTAMS} gave a partial answer to this question. In fact, they proved that a locally conformally flat simply connected, compact Miao-Tam critical metric $(M^{n},g,f)$ with boundary isometric to a standard sphere $\mathbb{S}^{n-1}$ must be isometric to a geodesic ball in a simply connected space form $\mathbb{R}^{n}$, $\mathbb{H}^{n}$ or $\mathbb{S}^{n}.$ This result was improved in dimensions 3 and 4 in \cite{BDR}. More precisely, Barros, Di\'{o}genes and Ribeiro \cite{BDR}, based on the techniques outlined in \cite{CaoChen}, proved that a Bach-flat simply connected, four-dimensional compact Miao-Tam critical metric with boundary isometric to a standard sphere $\mathbb{S}^{3}$ must be isometric to a geodesic ball in a simply connected space form $\mathbb{R}^{4}$, $\mathbb{H}^{4}$ or $\mathbb{S}^{4}.$ In addition, they showed that in dimension three the result even is true replacing the Bach-flat condition by the weaker assumption that $M^3$ has divergence-free Bach tensor.

At the same time, by assuming the Einstein condition, Miao and Tam (cf. Theorem 1.1 in \cite{miaotamTAMS})  were able to remove the condition of boundary isometric to a standard sphere. More precisely, they proved the following result.

\begin{theorem}[Miao-Tam, \cite{miaotamTAMS}]
\label{thmMT}
Let $(M^{n},g,f)$ be a connected, compact Einstein Miao-Tam critical metric with smooth boundary $\partial M.$ Then $(M^{n},g)$ is isometric to a geodesic ball in a simply connected space form $\mathbb{R}^{n}$, $\mathbb{H}^{n}$ or $\mathbb{S}^{n}.$
\end{theorem}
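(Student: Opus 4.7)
The plan is to exploit the Einstein hypothesis to reduce the overdetermined system~\eqref{eqMiaoTam1} to a pure Hessian equation for $f$, and then to invoke a Tashiro/Obata-type rigidity argument. Writing $Ric=(R/n)g$ (the scalar curvature is constant by Theorem~7 of \cite{miaotam}, or automatically by the second Bianchi identity for Einstein manifolds of dimension $\ge 3$) and substituting into \eqref{eqMiaoTam1}, a single trace gives $\Delta f=-(Rf+n)/(n-1)$. Feeding this back into \eqref{eqMiaoTam1} yields
\[
Hess\, f \;=\; -\,\frac{Rf+n}{n(n-1)}\,g \;=\; \frac{\Delta f}{n}\,g,
\]
so the traceless Hessian of $f$ vanishes and $\nabla f$ is a closed conformal vector field on $M$.

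Next, set $\psi:=\Delta f/n$. Commuting covariant derivatives in $\nabla_i\nabla_j f=\psi\,g_{ij}$ via the Ricci identity and contracting with the Einstein equation produces $\nabla\psi=-\frac{R}{n(n-1)}\nabla f$, so $\psi$, and hence $|\nabla f|^2$ (via $\tfrac12\nabla|\nabla f|^2=\psi\nabla f$), is a function of $f$ alone. In particular each regular level set $\{f=t\}$ is totally umbilical with $|\nabla f|$ constant on it, so away from the critical set of $f$ the metric takes the warped-product form
\[
g \;=\; ds^2 \,+\, \varphi(s)^2\, g_\Sigma,
\]
for a model fiber $(\Sigma,g_\Sigma)$ and a warping function $\varphi$ satisfying an explicit ODE dictated by the Hessian equation. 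A direct curvature computation in these coordinates, combined with the Einstein condition, forces $(\Sigma,g_\Sigma)$ to be Einstein and in fact of constant curvature, and pins down $\varphi$; together these identify $(M,g)$ with an open subset of a simply connected space form of constant sectional curvature $R/(n(n-1))$.

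Finally, since $f$ satisfies a linear second-order ODE along any integral curve of $\nabla f$ and vanishes on $\partial M$, the potential is determined up to the location of its unique interior critical point; comparison with the explicit potentials on $\mathbb{R}^n$, $\mathbb{H}^n$, $\mathbb{S}^n$ exhibited in the examples above shows that $M$ is a geodesic ball in the appropriate space form. The main technical obstacle is the passage from the local warped-product picture, valid only on the complement of the critical set of $\nabla f$, to a global identification: one must analyze the behaviour of $\varphi$ and $|\nabla f|$ as one approaches an interior zero of $\nabla f$, verifying that the solution extends smoothly through that point and that compactness together with the condition $f|_{\partial M}=0$ closes $M$ up to a single geodesic ball rather than a more degenerate warped region.
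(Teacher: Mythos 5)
A preliminary remark: the paper does not prove Theorem \ref{thmMT} at all --- it is imported from Miao--Tam \cite{miaotamTAMS} (their Theorem 1.1) and used as a black box in Proposition \ref{propC}, Theorem \ref{thmMain} and Corollary \ref{corthmtypebochener} --- so there is no internal proof to compare against. Your reduction is nevertheless correct, and it is essentially the route taken in \cite{miaotamTAMS}: substituting $Ric=(R/n)g$ into \eqref{eqMiaoTam1} and using the trace identity $(n-1)\Delta f+Rf=-n$ does give
\[
\nabla^{2}f=-\frac{Rf+n}{n(n-1)}\,g=\frac{\Delta f}{n}\,g,
\]
which is the classical concircular (Obata--Tashiro) equation, and your formulas $\nabla\psi=-\frac{R}{n(n-1)}\nabla f$ and $\tfrac12\nabla|\nabla f|^{2}=\psi\nabla f$ check out.

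That said, the argument stops exactly where the substantive work begins, and you acknowledge as much by labelling the global identification a ``technical obstacle'' without resolving it. Two concrete problems. First, the assertion that the Einstein condition on $ds^{2}+\varphi(s)^{2}g_{\Sigma}$ forces $(\Sigma,g_{\Sigma})$ to have constant curvature is false as stated for general $n$: it only forces $\Sigma$ to be Einstein, and for $n\ge 5$ there are Einstein warped products over an interval (generalized Schwarzschild metrics $V(r)^{-1}dr^{2}+r^{2}g_{\Sigma}$ with $\Sigma$ Einstein but not round) whose fibers are not space forms. Roundness of the fiber must come from the interior critical point of $f$: since $f$ vanishes on $\partial M$ and is nonconstant, it has an interior extremum $p$; one checks $\psi(p)\neq 0$ (otherwise the ODE $u''=\psi(u)$ satisfied by $f$ along geodesics through $p$, with $u'(0)=0$, would force $f$ to be constant), so the nearby level sets are small geodesic spheres, and smoothness of $g$ at $p$ identifies $g_{\Sigma}$ with the round metric and pins down $\varphi$. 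Second, to conclude that $M$ is a single geodesic ball rather than a union of warped pieces one needs uniqueness of the critical point and connectedness of $\partial M$, neither of which is addressed (the theorem does not assume the boundary connected). These are precisely the steps carried out in \cite{miaotamTAMS}; alternatively, once you have $\nabla^{2}f=\frac{\Delta f}{n}g$ you could invoke the Tashiro/Reilly classification of compact manifolds with boundary carrying a nontrivial solution of $\nabla^{2}f=\phi(f)g$ with $f|_{\partial M}=0$ --- but some such argument or citation is required, and as written the proposal has a genuine gap at its final step.
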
 Recently, Baltazar and Ribeiro \cite{BalRi} improves Theorem \ref{thmMT} by replacing the assumption of Einstein in the Miao-Tam result by the parallel Ricci tensor condition, which is weaker that the former. See the work of Corvino, Eichmair and Miao \cite{CEM} for further results related.
 
Before to proceed, let us point out that the formal adjoint of the linearization of the scalar curvature plays an important role in problems
related to prescribing the scalar curvature function. We also recall that a complete Riemannian manifold $M^n$ with boundary $\partial M$ (possibly empty) is said to be {\it static} if it admits a smooth non-trivial solution $\lambda\in C^{\infty}(M)$ to the equation
\begin{equation}
\label{eqFM}
 \mathfrak{L}_{g}^{*}(\lambda)=0.
\end{equation} It is well-known that the existence of a static potential imposes many restrictions on the geometry of the underlying manifold. For more details see, for instance, \cite{Wald}.

We recall that Fischer-Marsden \cite{fischermarsden} conjectured that a standard sphere is the only solution to the equation (\ref{eqFM}) on compact manifold. A counter-example to the Fischer-Marsden conjecture was obtained when $g$ is conformally flat, for more details, we refer the reader to \cite{kobayashi} and \cite{lafontaine}. However, a classical result states that the standard hemisphere has the maximum possible boundary area among static three-manifolds with positive scalar curvature and connected boundary. More precisely, a result due to Shen \cite{Shen} and Boucher-Gibbons-Horowitz \cite{BGH} asserts that the boundary $\partial M$ of a compact three-dimensional oriented static manifold with connected boundary and scalar curvature $6$ must be a $2$-sphere whose area satisfies the inequality $|\partial M|\leq 4\pi.$ Moreover, the equality holds if and only if $M^3$ is equivalent to the standard hemisphere. A similar result was obtained by Hijazi, Montiel
and Raulot \cite{HMR}. 

Based on the above result and taking into account that Miao-Tam critical metrics have constant scalar curvature, we shall estimate the area of the boundary of a compact three-manifold satisfying (\ref{eqMiaoTam}). We now state our first result as follows.

\begin{theorem}\label{thmMain}
Let $(M^{3},\,g,\,f)$ be a compact, oriented, Miao-Tam critical metric with connected boundary $\partial M$ and nonnegative scalar curvature. Then $\partial M$ is a $2$-sphere and
\begin{equation}
\label{eqthmMain} |\partial M|\leq\frac{4\pi}{C(R)},
\end{equation} where $C(R)=\frac{R}{6}+\frac{1}{4|\nabla f|^2}$ is constant. Moreover, the equality in (\ref{eqthmMain}) holds if and only if $(M^{3},\,g)$ is isometric to a geodesic ball in a simply connected space form $\Bbb{R}^3$ or $\Bbb{S}^3.$
\end{theorem}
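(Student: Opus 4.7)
The plan is to adapt the Shen \cite{Shen} and Boucher--Gibbons--Horowitz \cite{BGH} strategy for compact three-dimensional static manifolds, absorbing the extra term produced by the Miao--Tam equation via a signed interior integral. In outline: first extract the induced geometry of $\partial M$ from \eqref{eqMiaoTam1}; then combine the Gauss equation with the decomposition $\mathrm{Ric}=\ric+\tfrac{R}{3}g$ to get a pointwise identity for the Gaussian curvature $K^{\partial M}$; finally use the Miao--Tam structural identity $f\,\ric=\mathring{\mathrm{Hess}}\,f$ to convert the leftover boundary integral of $\ric(\nu,\nu)$ into a signed bulk integral, and conclude by Gauss--Bonnet.

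Since $f^{-1}(0)=\partial M$, tracing \eqref{eqMiaoTam1} gives $-(n-1)\Delta f = n+fR$, so $\Delta f=-\tfrac{3}{2}$ on $\partial M$ and substituting back yields $Hess\,f = -\tfrac{1}{2}g$ there. Using $Y(f)\equiv 0$ along $\partial M$ one finds $Hess\,f(X,Y)=h(X,Y)|\nabla f|$ for tangential $X,Y$, so $\partial M$ is totally umbilical with second fundamental form $h=-\tfrac{1}{2|\nabla f|}g|_{\partial M}$; the vanishing $Hess\,f(\nu,X)=0$ for tangential $X$ shows that $|\nabla f|$ is constant on $\partial M$. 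Combined with the Gauss equation and $R=2K^{M}+2\mathrm{Ric}(\nu,\nu)$ (valid in dimension three), this produces the pointwise identity
\begin{equation*}
K^{\partial M}\;=\;\frac{R}{6}+\frac{1}{4|\nabla f|^{2}}-\ric(\nu,\nu)\;=\;C(R)-\ric(\nu,\nu)
\end{equation*}
on $\partial M$.

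The crux is then to show $\int_{\partial M}\ric(\nu,\nu)\,d\sigma\le 0$. Taking the traceless part of \eqref{eqMiaoTam1} yields $f\,\ric=\mathring{\mathrm{Hess}}\,f$. Pairing with $\ric$ and integrating by parts, while using $\mathrm{div}\,\ric=0$ (contracted second Bianchi plus constancy of $R$, by Miao--Tam) together with $\nabla f=-|\nabla f|\nu$ and $|\nabla f|$ constant on $\partial M$, produces
\begin{equation*}
\int_{M}f|\ric|^{2}\,dv\;=\;\int_{M}\langle Hess\,f,\ric\rangle\,dv\;=\;-|\nabla f|\int_{\partial M}\ric(\nu,\nu)\,d\sigma.
\end{equation*}
Because $f>0$ in the interior, the left side is nonnegative, giving the desired sign.

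Integrating the Gauss identity over $\partial M$ and invoking Gauss--Bonnet now yields $C(R)|\partial M|\le 2\pi\chi(\partial M)$; since $R\ge 0$ makes $C(R)>0$, one obtains $\chi(\partial M)>0$, so $\partial M\cong\mathbb{S}^{2}$ and $|\partial M|\le 4\pi/C(R)$. Equality forces $\ric\equiv 0$, whence $(M^{3},g)$ is Einstein and Theorem \ref{thmMT} identifies it with a geodesic ball in $\mathbb{R}^{3}$, $\mathbb{H}^{3}$, or $\mathbb{S}^{3}$; the hyperbolic case is excluded by $R\ge 0$. The most delicate piece is the sign accounting in the integration by parts, which relies crucially on the precise normalization $Hess\,f|_{\partial M}=-\tfrac{1}{2}g$ and on the constancy of $|\nabla f|$ along $\partial M$.
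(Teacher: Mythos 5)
Your argument is essentially the paper's own proof. The pointwise identity $K^{\partial M}=C(R)-\mathring{Ric}(\nu,\nu)$ that you extract from the Gauss equation and the umbilicity $Hess\,f|_{\partial M}=-\tfrac12 g$ is exactly the computation leading to (\ref{intK2}); your integration by parts of $\langle Hess\,f,\mathring{Ric}\rangle$ against $\mathrm{div}\,\mathring{Ric}=0$ is precisely Lemma \ref{lemB2}; and the endgame (Gauss--Bonnet, positivity of $C(R)$, Theorem \ref{thmMT} in the equality case with the hyperbolic model excluded by $R\ge 0$) is identical. The sign you report for $h$ is opposite to the paper's only because you compute it with respect to the inward normal $\nabla f/|\nabla f|$; this is harmless, since $h$ enters the Gauss equation quadratically.

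The one genuine omission is the unsupported assertion that $f>0$ in the interior. From $f^{-1}(0)=\partial M$ one only gets that $f$ has a fixed sign on $M\setminus\partial M$; when $R=0$ the maximum principle applied to $2\Delta f=-3$ does force $f>0$, but when $R>0$ the case $f<0$ genuinely occurs (a geodesic ball in $\mathbb{S}^3$ strictly larger than a hemisphere has $f<0$ in its interior), so it cannot be waved away. Your sign bookkeeping --- $\nu=-\nabla f/|\nabla f|$, hence $\int_M f|\mathring{Ric}|^2=-|\nabla f|\int_{\partial M}\mathring{Ric}(\nu,\nu)$, hence $\int_{\partial M}\mathring{Ric}(\nu,\nu)\le 0$ --- is written only for $f\ge 0$. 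Fortunately the other case is symmetric: if $f\le 0$ then $\nu=+\nabla f/|\nabla f|$, the boundary term changes sign, and the left-hand side is now nonpositive, so one again concludes $\int_{\partial M}\mathring{Ric}(\nu,\nu)\le 0$, and the equality analysis still forces $\mathring{Ric}\equiv 0$ because $f|\mathring{Ric}|^2$ has a fixed sign. The paper carries out this second case explicitly (Remark \ref{r2} and the end of its proof of Theorem \ref{thmMain}); your write-up needs that half to be complete.
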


For what follows, we remember that Ambrozio \cite{Lucas} obtained some classification results for static three-manifolds with positive scalar curvature. To do so, he provided a B\"ochner type formula for 
three-dimensional static manifolds in\-vol\-ving the traceless Ricci tensor and the Cotton tensor. For our purposes we recall that  the Weyl tensor $W$ is defined by the following decomposition formula
\begin{eqnarray}
\label{weyl}
R_{ijkl}&=&W_{ijkl}+\frac{1}{n-2}\big(R_{ik}g_{jl}+R_{jl}g_{ik}-R_{il}g_{jk}-R_{jk}g_{il}\big) \nonumber\\
 &&-\frac{R}{(n-1)(n-2)}\big(g_{jl}g_{ik}-g_{il}g_{jk}\big),
\end{eqnarray} where $R_{ijkl}$ stands for the Riemann curvature tensor, whereas the Cotton tensor $C$ is given by
\begin{equation}
\label{cotton} \displaystyle{C_{ijk}=\nabla_{i}R_{jk}-\nabla_{j}R_{ik}-\frac{1}{2(n-1)}\big(\nabla_{i}R
g_{jk}-\nabla_{j}R g_{ik}).}
\end{equation} Note that $C_{ijk}$ is skew-symmetric in the first two indices and trace-free in any two indices. We also remember that  $W \equiv 0$ in dimension three.

In the sequel, motivated by \cite{Lucas}, we provide a B\"ochner type formula for Riemannian manifolds satisfying (\ref{eqMiaoTam1}), which is similar to Ambrozio's formula obtained for static spaces. Let us point out that our arguments designed for the proof of such a formula differ significantly from  \cite{Lucas}. Here, we shall use a formula involving the commutator of the Laplacian and Hessian acting on functions outlined in \cite{VIA}. More precisely, we have established the following result.

\begin{theorem}\label{thmtypebochener}
Let $(M^3,\,g)$ be a connected, smooth Riemannian manifold and $f$ is a smooth function on $M^3$ satisfying (\ref{eqMiaoTam1}). Then we have:
\begin{equation}\label{typebochener}
\frac{1}{2}div\Big(f\nabla|\mathring{Ric}|^2\Big) = \Big(|\nabla \ric|^2 + \frac{|C|^2}{2}\Big)f + \Big(R|\mathring{Ric}|^2  + 6 tr(\mathring{Ric}^3)\Big)f + \frac{3}{2}|\mathring{Ric}|^2,
\end{equation} where $C$ stands for the Cotton tensor and $\mathring{Ric}$ is the traceless Ricci tensor.
\end{theorem}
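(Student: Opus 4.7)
The strategy is to derive the pointwise algebraic identity forced by (\ref{eqMiaoTam1}) and then run a tensor Bochner argument for $\mathring{Ric}$, invoking the commutator identity between $\Delta$ and the Hessian on functions recalled in \cite{VIA} to reduce everything to the structure of three-dimensional geometry. Tracing (\ref{eqMiaoTam1}) in dimension three yields $\Delta f = -\tfrac{1}{2}(Rf + 3)$, and substituting back one obtains the key pointwise identity
\[
\mathring{Hess}\, f \;=\; f\,\mathring{Ric}.
\]
Since $R$ is constant by Theorem 7 of \cite{miaotam}, this is the exact analog of the identity that Ambrozio exploits for three-dimensional static metrics \cite{Lucas}, so his scheme is the natural blueprint. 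The inhomogeneity $g$ on the right of (\ref{eqMiaoTam}) shifts $\Delta f$ by a constant, and this shift is what ultimately produces the extra $\tfrac{3}{2}|\mathring{Ric}|^2$ term that distinguishes the Miao--Tam formula from the static one.

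Next, I would split the divergence as
\[
\tfrac{1}{2}\operatorname{div}\bigl(f\nabla|\mathring{Ric}|^2\bigr) = \tfrac{f}{2}\Delta|\mathring{Ric}|^2 + \tfrac{1}{2}\langle \nabla f, \nabla|\mathring{Ric}|^2\rangle,
\]
apply the tensor Bochner identity $\tfrac{1}{2}\Delta|\mathring{Ric}|^2 = |\nabla \mathring{Ric}|^2 + \mathring{R}^{ij}\Delta\mathring{R}_{ij}$, and evaluate $f\mathring{R}^{ij}\Delta\mathring{R}_{ij}$ by applying $\Delta$ to the identity $\mathring{Hess}\,f = f\mathring{Ric}$. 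The left-hand side becomes $\Delta\nabla_i\nabla_j f$ up to a trace, which in turn is rewritten via the Viaclovsky commutator formula from \cite{VIA}; this produces curvature contractions of the form $R_{ikjl}\nabla^k\nabla^l f$ and $R_{ik}\nabla_j\nabla^k f$, plus derivative-of-Ricci terms. Constancy of $R$ makes $\nabla_i\nabla_j\Delta f = -\tfrac{R}{2}\nabla_i\nabla_j f$, and the identity $C_{ijk} = \nabla_i R_{jk} - \nabla_j R_{ik}$ (valid for constant $R$) is what allows the antisymmetric combinations of derivative-of-Ricci to assemble into the Cotton tensor.

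Finally, the dimension-three hypothesis is used to simplify. Since $W \equiv 0$, (\ref{weyl}) expresses $R_{ikjl}$ as an explicit polynomial in $Ric$ and $R$; substituting this together with the rewrite $\nabla_i\nabla_j f = f\mathring{R}_{ij} + \tfrac{\Delta f}{3}g_{ij}$ into the commutator, then contracting against $\mathring{R}^{ij}$, produces the cubic and scalar curvature monomials $f\operatorname{tr}(\mathring{Ric}^3)$ and $Rf|\mathring{Ric}|^2$. The antisymmetrized gradient-of-Ricci pieces collect as $\tfrac{1}{2}f|C|^2$, the gradient cross-terms $\langle\nabla f, \nabla|\mathring{Ric}|^2\rangle$ coming from the product rule cancel against the one from the divergence split, and the inhomogeneous constant $-\tfrac{3}{2}$ inside $\Delta f$ supplies the remaining $\tfrac{3}{2}|\mathring{Ric}|^2$. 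The main obstacle I expect is the bookkeeping in this final stage: ensuring that every contraction, curvature substitution, and sign in the dimension-three reduction aligns so that the cubic and scalar pieces emerge with the precise coefficients $6$ and $R$ demanded by (\ref{typebochener}), and that the various $\langle\nabla f,\nabla|\mathring{Ric}|^2\rangle$ contributions cancel cleanly rather than leaving residual terms.
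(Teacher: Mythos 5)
Your proposal follows essentially the same route as the paper's proof: the same splitting of the divergence, applying the Laplacian to $f\mathring{Ric}=\mathring{Hess}\,f$, invoking the Viaclovsky commutator formula, using $W\equiv 0$ in dimension three to reduce $R_{ikjp}$ to Ricci terms, and assembling the Cotton tensor from the antisymmetrized derivative-of-Ricci pieces. The only loose point is your claim that the $\langle\nabla f,\nabla|\mathring{Ric}|^2\rangle$-type contributions ``cancel'': they do not cancel outright but combine, together with the first-order terms from the commutator, into $-2C_{pij}\mathring{R}^{ij}\nabla^p f$, which is then converted to $\tfrac{1}{2}f|C|^2$ via the identity $f^2|C|^2=-4fC_{ijk}\nabla^i f\,\mathring{R}^{jk}$ (Lemma \ref{lemacotton}).
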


Finally, as an application of Theorem \ref{thmtypebochener} and Theorem \ref{thmMT} we obtain the following rigidity result.

\begin{corollary}\label{corthmtypebochener}
Let $(M^3,\,g,\,f)$ be a compact, oriented, connected Miao-Tam critical metric with smooth boundary $\partial M$ and positive scalar curvature, $f$ is also assumed to be nonnegative. Suppose that
\begin{equation}
\label{eqCOR}
|\mathring{Ric}|^2 \leq \frac{R^2}{6}.
\end{equation} Then $M^3$ is isometric to a geodesic ball in $\mathbb{S}^3$.
\end{corollary}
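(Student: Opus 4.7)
The plan is to integrate the B\"ochner-type identity \eqref{typebochener} over $M^3$, exploit that $f$ vanishes on $\partial M$ to kill the divergence term, and then show that under the pinching hypothesis \eqref{eqCOR} every summand left in the integrand is pointwise nonnegative. This will force the traceless Ricci tensor to vanish identically, so that $(M^3,g)$ is Einstein, and Theorem \ref{thmMT} then closes the argument.

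Concretely, applying the divergence theorem to \eqref{typebochener} and using $f\equiv 0$ on $\partial M$ gives
\[
0 \,=\, \int_{\partial M} f\,\langle \nabla |\mathring{Ric}|^2,\nu\rangle\,dA \,=\, \int_M \Bigl[\bigl(|\nabla \mathring{Ric}|^2 + \tfrac{1}{2}|C|^2\bigr) f + \bigl(R|\mathring{Ric}|^2 + 6\,\mathrm{tr}(\mathring{Ric}^3)\bigr) f + \tfrac{3}{2}|\mathring{Ric}|^2 \Bigr] dV.
\]
The first and third bracketed terms are nonnegative once one uses $f\geq 0$, so the only delicate point is to sign the middle one. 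I would control it with the sharp algebraic inequality
\[
\mathrm{tr}(A^3) \,\geq\, -\tfrac{1}{\sqrt{6}}\,|A|^3,
\]
valid for every symmetric traceless $3\times 3$ matrix $A$; this is a short Lagrange multiplier computation on the eigenvalues subject to $\sum \lambda_i = 0$ and $\sum \lambda_i^2 = |A|^2$, with equality exactly when two of them coincide. Applying it to $A=\mathring{Ric}$ yields
\[
R|\mathring{Ric}|^2 + 6\,\mathrm{tr}(\mathring{Ric}^3) \,\geq\, |\mathring{Ric}|^2\bigl(R - \sqrt{6}\,|\mathring{Ric}|\bigr),
\]
and the pinching hypothesis $|\mathring{Ric}|^2 \leq R^2/6$ together with $R>0$ is precisely the statement $\sqrt{6}\,|\mathring{Ric}|\leq R$, so the right-hand side is $\geq 0$. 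Multiplying by $f\geq 0$ preserves the sign.

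Consequently every summand in the integrand is pointwise nonnegative and their sum integrates to zero, so each summand vanishes identically on $M^3$. In particular $|\mathring{Ric}|^2\equiv 0$, i.e.\ $(M^3,g)$ is Einstein (the scalar curvature is already known to be constant from Theorem~7 of \cite{miaotam}). At this point Theorem \ref{thmMT} forces $(M^3,g)$ to be isometric to a geodesic ball in a simply connected space form, and the assumption $R>0$ singles out $\mathbb{S}^3$. The main obstacle I anticipate is establishing (and correctly orienting) the sharp cubic estimate on $\mathrm{tr}(A^3)$; once it is in place, the rest of the argument is purely formal, combining the B\"ochner identity with the integration-by-parts and with the Einstein rigidity result already recorded as Theorem \ref{thmMT}.
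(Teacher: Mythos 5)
Your proposal is correct and follows essentially the same route as the paper: integrate the B\"ochner formula \eqref{typebochener}, use $f=0$ on $\partial M$ and $f\geq 0$, invoke Okumura's inequality $tr(\mathring{Ric}^3)\geq -\tfrac{1}{\sqrt{6}}|\mathring{Ric}|^3$ together with the pinching \eqref{eqCOR} to sign the cubic term, conclude that $\mathring{Ric}\equiv 0$, and finish with Theorem \ref{thmMT}. The only cosmetic difference is that you re-derive the cubic estimate via Lagrange multipliers whereas the paper simply cites Okumura's Lemma.
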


\section{Preliminaries}
\label{Preliminaries}

In this section we shall present some preliminaries which will be useful for the
establishment of the desired results. Firstly, we recall that the fundamental equation of a Miao-Tam critical metric (\ref{eqMiaoTam}) becomes
\begin{equation}\label{eq:miaotam}
-(\Delta f)g+Hessf-fRic=g.
\end{equation}  For simplicity, we can rewrite equation (\ref{eq:miaotam}) in the tensorial language as follows
\begin{equation}
\label{fundeqtens} -(\Delta f)g_{ij}+\nabla_{i}\nabla_{j}f -fR_{ij}=g_{ij}.
\end{equation} In particular, tracing (\ref{eq:miaotam}) we have
\begin{equation}\label{eqtrace}
(n-1)\Delta f+Rf=-n.
\end{equation} From this, it is easy to check that
\begin{equation}
\label{p1a}
f\mathring{Ric}=Hess\,f +\frac{Rf+n}{n(n-1)}g,
\end{equation} where $\mathring{T}$ stands for the traceless of $T.$ We also have

\begin{equation}
\label{IdRicHess} f\mathring{Ric}=\mathring{Hess} f.
\end{equation} The following lemma, obtained previously in \cite{BDR}, will be useful.

\begin{lemma}[\cite{BDR}]
\label{lem1BDR} Let $\big(M^n,\,g,\,f)$ be a Miao-Tam critical metric. Then
\begin{equation*}
f\big(\nabla_{i}R_{jk}-\nabla_{j}R_{ik}\big)=R_{ijks}\nabla^{s}f + \frac{R}{n-1}\big(\nabla_{i}f
g_{jk}-\nabla_{j}f g_{ik}\big)- \big(\nabla_{i}f R_{jk}-\nabla_{j}f R_{ik}\big).
\end{equation*}
\end{lemma}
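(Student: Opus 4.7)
The plan is to prove the identity by differentiating the fundamental Miao-Tam equation, commuting covariant derivatives via the Ricci identity, and using the fact that the scalar curvature $R$ is constant.

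First, I would rewrite the fundamental equation \eqref{fundeqtens} in the form
\begin{equation*}
\nabla_{i}\nabla_{j}f = fR_{ij} + \bigl(\Delta f + 1\bigr)g_{ij},
\end{equation*}
so that the Hessian is expressed linearly in terms of Ricci and the trace $\Delta f$. Taking a covariant derivative $\nabla_{k}$ and then reversing the roles of the first two indices, I would produce two expressions
\begin{equation*}
\nabla_{i}\nabla_{j}\nabla_{k}f = (\nabla_{i}f)R_{jk} + f\nabla_{i}R_{jk} + (\nabla_{i}\Delta f)g_{jk},
\end{equation*}
\begin{equation*}
\nabla_{j}\nabla_{i}\nabla_{k}f = (\nabla_{j}f)R_{ik} + f\nabla_{j}R_{ik} + (\nabla_{j}\Delta f)g_{ik},
\end{equation*}
and subtract them to isolate the expression $f(\nabla_{i}R_{jk}-\nabla_{j}R_{ik})$ that appears on the left-hand side of the claimed identity.

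Next, I would identify the difference on the left by invoking the standard Ricci identity applied to the one-form $\nabla_{k}f$, which yields
\begin{equation*}
\nabla_{i}\nabla_{j}\nabla_{k}f - \nabla_{j}\nabla_{i}\nabla_{k}f = R_{ijks}\nabla^{s}f.
\end{equation*}
Setting this equal to the subtraction above gives an equation that already contains the terms $(\nabla_i f)R_{jk}-(\nabla_j f)R_{ik}$ and $f(\nabla_i R_{jk}-\nabla_j R_{ik})$, together with a leftover piece $(\nabla_{i}\Delta f)g_{jk}-(\nabla_{j}\Delta f)g_{ik}$.

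To handle that last piece I would use the trace identity \eqref{eqtrace}, namely $(n-1)\Delta f + Rf=-n$, combined with the Miao-Tam result that a connected solution of \eqref{eqMiaoTam1} has constant scalar curvature; differentiating yields
\begin{equation*}
\nabla_{k}\Delta f = -\tfrac{R}{n-1}\nabla_{k}f.
\end{equation*}
Substituting this into the leftover piece converts it precisely into $-\tfrac{R}{n-1}\bigl((\nabla_i f)g_{jk}-(\nabla_j f)g_{ik}\bigr)$. Solving the resulting equation algebraically for $f(\nabla_{i}R_{jk}-\nabla_{j}R_{ik})$ produces the stated formula after rearrangement.

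I do not expect any genuine obstacle here; the proof is essentially a commutator computation. The only points requiring care are sign/convention consistency in the Ricci identity for a one-form (which must match the sign convention for $R_{ijkl}$ used in \eqref{weyl}) and the invocation of constant scalar curvature to eliminate derivatives of $R$ when differentiating \eqref{eqtrace}. Once these are pinned down the identity follows directly.
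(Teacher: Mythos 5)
Your argument is correct and is the standard derivation: differentiate the rewritten fundamental equation $\nabla_i\nabla_j f = fR_{ij}+(\Delta f+1)g_{ij}$, antisymmetrize in $i,j$, apply the Ricci identity to $\nabla_k f$, and eliminate $\nabla_k\Delta f$ via the traced equation $(n-1)\Delta f+Rf=-n$ together with constancy of $R$. Note that the paper itself gives no proof of this lemma --- it simply cites \cite{BDR} --- but your route is precisely the one used there, and your sign convention for the Ricci identity is consistent with the one the paper employs in the proof of Lemma~\ref{commutator}.
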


We now recall the following well-known lemma. 

\begin{lemma}
\label{lemdivformula} Let $T$ be a $(0,2)$-tensor on a Riemannian manifold $(M^n,\,g)$ then
\begin{equation*}
div(T(\phi Z))=\phi(divT)(Z)+\phi\langle\nabla Z,T\rangle+T(\nabla\phi,Z),
\end{equation*} for all $Z\in\mathfrak{X}(M)$ and any smooth function $\phi$ on $M.$
\end{lemma}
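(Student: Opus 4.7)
The statement is a pointwise product rule for the divergence of the $1$-form obtained by plugging a vector field into a $(0,2)$-tensor, and the strategy is to verify it by a direct local-coordinate expansion using only the Leibniz rule for the Levi-Civita connection. Concretely, I fix a point $p\in M$ and choose a local orthonormal frame $\{e_i\}$ around $p$ with $(\nabla e_i)_p=0$; then the divergence at $p$ of any $1$-form $\alpha$ is simply $\sum_i e_i(\alpha(e_i))=\nabla^i\alpha_i$, so the computation is purely algebraic at $p$, and the conclusion then holds everywhere since $p$ was arbitrary.

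With that setup I would write the $1$-form $\alpha:=T(\phi Z)$ in components as $\alpha_j = T_{ij}(\phi Z)^i = \phi\,T_{ij}Z^i$, and then compute
\begin{equation*}
div\,\alpha \;=\; \nabla^{j}\alpha_{j} \;=\; \nabla^{j}\bigl(\phi\,T_{ij}Z^{i}\bigr) \;=\; (\nabla^{j}\phi)\,T_{ij}Z^{i} \;+\; \phi\,(\nabla^{j}T_{ij})\,Z^{i} \;+\; \phi\,T_{ij}\,\nabla^{j}Z^{i},
\end{equation*}
where the three summands come from Leibniz applied to the product $\phi\cdot T_{ij}\cdot Z^{i}$. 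It then remains only to reinterpret each of these three terms as the corresponding invariant object: the first summand is $T(\nabla\phi,Z)$ by definition of evaluation of a $(0,2)$-tensor; the second is $\phi\,(div\,T)(Z)$, using $(div\,T)_{i}=\nabla^{j}T_{ij}$; and the third is $\phi\,\langle \nabla Z, T\rangle$ once we identify $\nabla Z$ with the $(1,1)$-tensor of components $\nabla^{j}Z^{i}$ and take the natural pairing with $T$ through the metric.

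The entire argument is elementary; there is no genuine obstacle. The only point deserving a moment of care is bookkeeping of conventions, namely agreeing once and for all on which slot of $T$ is contracted in the expression $T(\phi Z)$ (i.e.\ $T(W)(Y):=T(W,Y)$) and on the precise meaning of the pairing $\langle \nabla Z, T\rangle := T_{ij}\nabla^{j}Z^{i}$. With these fixed, the three terms produced by Leibniz match the three terms on the right-hand side of the claimed identity, and the lemma is established. Since $\phi$ and $Z$ were arbitrary, the formula holds for all smooth $\phi$ and $Z\in\mathfrak{X}(M)$ as stated.
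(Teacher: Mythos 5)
Your proof is correct. The paper states this lemma without proof (it is recalled as ``well-known''), and your direct Leibniz-rule computation in a frame normal at a point is exactly the standard verification one would supply; your explicit remark about fixing the slot convention for $T(\phi Z)$ and for $\langle\nabla Z,T\rangle$ is the only point of care, and it is moot in the paper's application since there $T=\mathring{Ric}$ is symmetric.
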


We also remember that if $\phi:\Sigma\to M$ is an immersion and $h$ denotes the second fundamental form, we have the Gauss Equation:
\begin{equation}
\label{eqgauss}R^{\Sigma}_{ijkl}=R_{ijkl}-h_{il}h_{jk}+h_{ik}h_{jl}.
\end{equation} For more details see \cite{chow} p. 485.

In the sequel we compute the commutator of the Laplacian and Hessian acting on functions. A detailed proof can be found in \cite{VIA} (cf. Proposition 7.1 in \cite{VIA}).

\begin{lemma}\label{commutator}
Let $f\in C^{4}(M).$ Then we have:
\begin{eqnarray*} (\Delta \nabla^2 f)_{ij} & = & \nabla^2_{ij} \Delta f + (R_{jp}g_{ik} + R_{ip}g_{jk} - 2R_{ikjp})\nabla^k\nabla^p f \\
& & + (\nabla_i R_{jp} + \nabla_j R_{pi} - \nabla_p R_{ij})\nabla^p f,
\end{eqnarray*} where $\nabla^2$ also stands for the Hessian.
\end{lemma}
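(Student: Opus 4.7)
The plan is to derive the identity directly from the Ricci commutation formula, moving the outer Laplacian $\Delta = g^{kl}\nabla_k\nabla_l$ past the Hessian $\nabla_i\nabla_j$ by successive commutations of covariant derivatives, and then tracing the resulting Riemann curvature pieces into Ricci tensors. No other tool is needed beyond the symmetry of the Hessian for a scalar function, the Ricci identity
\[
(\nabla_a\nabla_b - \nabla_b\nabla_a)T_{c_1\cdots c_r} = -\sum_{s=1}^{r} R_{abc_s}{}^{p}\,T_{c_1\cdots p\cdots c_r},
\]
and the definition of the Ricci tensor as a contraction of Riemann.

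The first step is to commute the innermost pair: writing $\nabla_j f$ as a $1$-form, the Ricci identity gives
\[
\nabla_l\nabla_i\nabla_j f = \nabla_i\nabla_l\nabla_j f - R_{lij}{}^{p}\nabla_p f,
\]
and the symmetry $\nabla_l\nabla_j f = \nabla_j\nabla_l f$ lets us rewrite $\nabla_i\nabla_l\nabla_j f = \nabla_i\nabla_j\nabla_l f$. Applying $\nabla_k$ and using Leibniz then yields
\[
\nabla_k\nabla_l\nabla_i\nabla_j f = \nabla_k\nabla_i\nabla_j\nabla_l f - (\nabla_k R_{lij}{}^{p})\nabla_p f - R_{lij}{}^{p}\nabla_k\nabla_p f.
\]
Next, two further applications of the Ricci identity, now treating $\nabla_j\nabla_l f$ and $\nabla_k\nabla_l f$ as $(0,2)$-tensors, permit the rearrangement
\[
\nabla_k\nabla_i\nabla_j\nabla_l f = \nabla_i\nabla_j\nabla_k\nabla_l f - (\nabla_i R_{kjl}{}^{p})\nabla_p f - R_{kjl}{}^{p}\nabla_i\nabla_p f - R_{kij}{}^{p}\nabla_p\nabla_l f - R_{kil}{}^{p}\nabla_j\nabla_p f.
\]

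The final step is to contract with $g^{kl}$. The leading piece $g^{kl}\nabla_i\nabla_j\nabla_k\nabla_l f$ collapses to $\nabla^2_{ij}\Delta f$. The Riemann contractions $g^{kl}R_{lij}{}^{p}$, $g^{kl}R_{kjl}{}^{p}$, $g^{kl}R_{kil}{}^{p}$, together with $g^{kl}R_{kij}{}^{p}\nabla_p\nabla_l f$, each produce a Ricci tensor (possibly with a sign governed by Riemann's antisymmetries) contracted against $\nabla^k\nabla^p f$, and these assemble exactly into the Hessian-coupled combination $(R_{jp}g_{ik} + R_{ip}g_{jk} - 2R_{ikjp})\nabla^k\nabla^p f$ after using the pair-swap symmetry $R_{abcd}=R_{cdab}$ to reorganize the mixed Riemann term. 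Likewise the two derivative-of-Riemann terms contract to $(\nabla_i R_{jp} + \nabla_j R_{pi} - \nabla_p R_{ij})\nabla^p f$, where the third summand arises from $g^{kl}\nabla_k R_{lij}{}^{p} = \nabla^l R_{lij}{}^{p}$ and is rewritten via the contracted second Bianchi identity $\nabla^l R_{lijp} = \nabla_i R_{jp} - \nabla_j R_{ip}$ (or alternatively by the uncontracted Bianchi identity) to extract the $-\nabla_p R_{ij}$ term.

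The main obstacle is purely bookkeeping: keeping the index positions, signs, and curvature-tensor symmetries consistent so that the mixed term $-2R_{ikjp}\nabla^k\nabla^p f$ emerges with the correct coefficient. In particular, one has to recognize that two of the three Hessian-coupled curvature terms reduce to $R_{ip}$ and $R_{jp}$ (giving the $R_{jp}g_{ik}$ and $R_{ip}g_{jk}$ contributions after relabeling), while the middle term, which comes from commuting $\nabla_k$ past $\nabla_i\nabla_j$ on $\nabla_l f$, produces a genuine Riemann piece that, after using $R_{kijp}=R_{jpki}$, is exactly $R_{ikjp}$; the factor of $2$ is then the artifact of this Riemann term not symmetrizing trivially with the Ricci pieces.
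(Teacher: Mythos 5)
Your proposal is correct and follows essentially the same route as the paper's proof: repeated application of the Ricci identity to commute the covariant derivatives in $g^{kl}\nabla_k\nabla_l\nabla_i\nabla_j f$ past one another, followed by contraction with $g^{kl}$ and the contracted second Bianchi identity to convert $\nabla^l R_{lijp}$ into the $\nabla_i R_{jp}-\nabla_j R_{ip}$ terms. The only difference is organizational --- you perform all commutations before tracing, whereas the paper contracts early and splits the computation into the pieces $I$ and $II$ --- which does not change the substance of the argument.
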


\begin{proof}
Since this lemma is crucial for the establishment of Theorem \ref{thmtypebochener}, we include its proof here for the sake of completeness. To do so, we adopt the notation used in \cite{VIA}. In fact, by using the Ricci identity, it is easy to check that

\begin{eqnarray}
\label{uho}
(\Delta \nabla^2 f)_{ij} & = & g^{kl}\nabla_k\nabla_l\nabla_i\nabla_j f\nonumber \\
& = & g^{kl}\nabla_k[\nabla_i\nabla_l\nabla_j f + R_{lijp}\nabla^pf]\nonumber \\
& = & g^{kl}\nabla_k\nabla_i\nabla_j\nabla_l f + g^{kl}\nabla_kR_{lijp}\nabla^p f + g^{kl}R_{lijp}\nabla_k\nabla^p f\nonumber \\
& = & I + II,
\end{eqnarray} where
\begin{eqnarray*}
I & = & g^{kl}\nabla_k\nabla_i\nabla_j\nabla_l f\\
 & = & g^{kl}[\nabla_i\nabla_k\nabla_j\nabla_l f + R_{kijp}\nabla^p\nabla_l f + R_{kilp}\nabla_j\nabla^p f] \\
& = & g^{kl}[\nabla_i(\nabla_j\nabla_k\nabla_l f + R_{kjlp}\nabla^p f)] + R_{kijp}\nabla^p\nabla^k f + R_{ip}\nabla_j\nabla^p f \\
& = & \nabla_i\nabla_j\Delta f + \nabla_i(R_{jp}\nabla^p f) - R_{ikjp}\nabla^k\nabla^p f + R_{ip}\nabla_j\nabla^p f,
\end{eqnarray*} and the second term is
\begin{eqnarray*}
II & = & g^{kl}\nabla_kR_{lijp}\nabla^p f + g^{kl}R_{lijp}\nabla_k\nabla^p f \\
& = & -g^{kl}\nabla_k R_{jpil}\nabla^p f + R_{lijp}\nabla^l\nabla^p f \\
& = & -(\nabla_pR_{ji} - \nabla_j R_{pi})\nabla^pf - R_{iljp}\nabla^l\nabla^p f\\
& = & -\nabla_pR_{ji}\nabla^p f + \nabla_j R_{pi}\nabla^p f - R_{ikjp}\nabla^k\nabla^p f.
\end{eqnarray*} Here, we used the once contracted Bianchi identity. Rearranging the terms, we obtain from (\ref{uho}) that 

\begin{eqnarray*}
(\Delta \nabla^2 f)_{ij} & = & \nabla_i\nabla_j\Delta f + \nabla_iR_{jp}\nabla^p f + R_{jp}\nabla_i\nabla^p f - R_{ikjp}\nabla^k\nabla^p f + R_{ip}\nabla_j\nabla^p f  \\
& & -\nabla_pR_{ji}\nabla^p f + \nabla_j R_{pi}\nabla^p f - R_{ikjp}\nabla^k\nabla^p f\\
& = & \nabla_i\nabla_j \Delta f + (R_{jp}g_{ik} + R_{ip}g_{jk} - 2R_{ikjp})\nabla^k\nabla^p f \\
& & + (\nabla_i R_{jp} + \nabla_j R_{pi} - \nabla_p R_{ij})\nabla^p f,
\end{eqnarray*} which allows us to complete the proof of the lemma.
\end{proof}

Next, we deduce a formula involving the Cotton tensor (\ref{cotton}) of Miao-Tam critical metrics on three-manifolds. It also plays crucial role in the proof of Theorem \ref{thmtypebochener}.

\begin{lemma}\label{lemacotton}
Let $(M^3,\,g,\,f)$ be a three-dimensional Miao-Tam critical metric. Then we have:
\begin{equation}
f^2|C|^2 = -4fC_{ijk}\nabla^if\mathring{Ric}^{jk}.
\end{equation}
\end{lemma}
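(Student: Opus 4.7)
My plan is to derive a pointwise expression for $fC_{ijk}$, then contract it with $C^{ijk}$ and exploit the algebraic symmetries of the Cotton tensor (skew in the first two indices, trace-free in any pair) to collapse everything down to the single term on the right hand side.

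First, I would record that by Theorem~7 of \cite{miaotam} a Miao-Tam critical metric has constant scalar curvature, so the definition (\ref{cotton}) of the Cotton tensor simplifies to
\begin{equation*}
C_{ijk}=\nabla_{i}R_{jk}-\nabla_{j}R_{ik}.
\end{equation*}
Feeding this into Lemma \ref{lem1BDR} (in dimension $n=3$) gives
\begin{equation*}
fC_{ijk}=R_{ijks}\nabla^{s}f+\tfrac{R}{2}(\nabla_{i}f\,g_{jk}-\nabla_{j}f\,g_{ik})-(\nabla_{i}f\,R_{jk}-\nabla_{j}f\,R_{ik}).
\end{equation*}

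Next, I would use the fact that $W\equiv 0$ in dimension three, which via the decomposition formula (\ref{weyl}) expresses the full Riemann tensor in terms of Ricci and scalar curvature. Substituting this into the $R_{ijks}\nabla^{s}f$ term and regrouping would yield an identity of the form
\begin{equation*}
fC_{ijk}=2\bigl(R_{ik}\nabla_{j}f-R_{jk}\nabla_{i}f\bigr)+\bigl(R_{js}\nabla^{s}f\,g_{ik}-R_{is}\nabla^{s}f\,g_{jk}\bigr)+\text{(terms proportional to $R$ and a metric)}.
\end{equation*}
This is a direct but tedious index-bookkeeping computation; I expect this step to be the main obstacle, since it requires careful cancellation among the Ricci, scalar and metric pieces.

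Finally, I would contract the above with $C^{ijk}$ to obtain $f|C|^{2}$. The crucial observation is that $C^{ijk}$ is skew in $i,j$ and trace-free in each pair of indices. Consequently, every term in $fC_{ijk}$ that contains a factor $g_{ik}$ or $g_{jk}$ dies upon contracting with $C^{ijk}$, and the constant-scalar-curvature piece $R(\nabla_{i}f\,g_{jk}-\nabla_{j}f\,g_{ik})$ vanishes for the same reason. Only $2(R_{ik}\nabla_{j}f-R_{jk}\nabla_{i}f)$ survives; applying the skew-symmetry $C^{ijk}=-C^{jik}$ and relabeling indices turns this into $-4C^{ijk}R_{jk}\nabla_{i}f$. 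Since $C^{ijk}g_{jk}=0$, one may replace $R_{jk}$ by $\mathring{R}_{jk}$ at no cost. Multiplying by $f$ gives the desired identity
\begin{equation*}
f^{2}|C|^{2}=-4fC_{ijk}\nabla^{i}f\,\mathring{Ric}^{jk},
\end{equation*}
completing the proof.
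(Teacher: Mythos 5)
Your proposal is correct and uses exactly the same ingredients as the paper: Lemma \ref{lem1BDR} with constant scalar curvature, the vanishing of the Weyl tensor in dimension three to rewrite $R_{ijkp}\nabla^{p}f$, and the skew-symmetry and trace-freeness of $C_{ijk}$ to isolate the single surviving term. The only cosmetic difference is that you contract the resulting expression for $fC_{ijk}$ directly with $C^{ijk}$, whereas the paper contracts it with $\mathring{Ric}^{jk}\nabla^{i}f$ and recognizes $\tfrac14 fC^{ijk}$ on the other side; both routes yield the identity $f^{2}|C|^{2}=-4fC_{ijk}\nabla^{i}f\,\mathring{Ric}^{jk}$.
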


\begin{proof} We start invoking Lemma \ref{lem1BDR} (see also Lemma 1 in \cite{BDR}) to get

\begin{equation}
fC_{ijk} = R_{ijkp}\nabla^p f  + \frac{R}{2}(\nabla_i f g_{jk} - \nabla_j f g_{ik}) - (\nabla_i f R_{jk} - \nabla_j f R_{ik})
\end{equation} By using the decomposition of $R_{ijkp}$ we have
\begin{eqnarray}
\label{e56}
fC_{ijk} &=& 2(R_{ik}\nabla_j f - R_{jk}\nabla_i f) - R(g_{ik}\nabla_j f - g_{jk}\nabla_i f)\nonumber\\&& + (g_{ik}R_{jp}\nabla^p f - g_{jk}R_{ip}\nabla^p f).
\end{eqnarray}
Now, substituting the traceless Ricci tensor into (\ref{e56}) we arrive at

\begin{eqnarray}\label{pl1}
fC_{ijk} & = & 2(\rc_{ik} \nabla_j f - \rc_{jk} \nabla_i f) + (\rc_{js}\nabla^s f g_{ik} - \rc_{is}\nabla^s f g_{jk})
\end{eqnarray} Taking into account that $C_{ijk}$ is skew-symmetric in the first two indices we have

\begin{eqnarray*}
-fC_{ijk}\rc^{jk}\nabla^i f & = & -\frac{f}{2}(C_{ijk} - C_{jik})\rc^{jk}\nabla^i f\\
& = & -\frac{f}{2}C_{ijk}\rc^{jk}\nabla^i f +\frac{f}{2}C_{ijk}\rc^{ik}\nabla^j f\\
& = & fC_{ijk}\frac{1}{2}(\rc^{ik}\nabla^j f - \rc^{jk}\nabla^i f).
\end{eqnarray*} Finally, by using this data, together with (\ref{pl1}), and remembering that $C_{ijk}$ is trace-free in any two indices we infer

\begin{eqnarray*}
-fC_{ijk}\rc^{jk}\nabla^i f & = & f C_{ijk}\left(\frac{1}{4}fC^{ijk} -\frac{1}{4}\left(\rc^{js}\nabla_s f g^{ik} - \rc^{is}\nabla_s f g^{jk}\right)\right)\\
 & =& \frac{1}{4}f^2|C|^2.
\end{eqnarray*} and we finish the establishment of the lemma.

\end{proof}

\section{Key Lemmas}

In what follows, we assume that $(M^{n},g,f)$ is a connected, compact  Miao-Tam critical metric with connected boundary $\partial M.$ Under these conditions, since $f^{-1}(0)=\partial M$ we deduce that $f$ does not change of signal. From now on we assume that $f$ nonnegative. In particular, $f>0$  at the interior of $M.$ Moreover, at regular points of $f,$ the vector field $\nu=-\frac{\nabla f}{\mid \nabla f \mid}$ is normal to $\partial M$ and also $|\nabla f|\neq 0$ along $\partial M.$ Therefore, the boundary condition, together with (\ref{eq:miaotam}), implies
\begin{eqnarray*}
X\big(|\nabla f|^{2}\big)&=&2\langle\nabla_{X}\nabla f,\nabla f\rangle\nonumber\\&=&2\nabla^{2}f(X,\nabla f)\nonumber\\&=&-\frac{2}{n-1}\langle X,\nabla f\rangle=0,
\end{eqnarray*} where $X\in\mathfrak{X}(\partial M).$ Whence, $|\nabla f|^{2}$ is constant along $\partial M.$ In particular, we arrive at

\begin{equation}
\nabla_{i}\nabla_{j}f=-\frac{1}{n-1}g_{ij}
\end{equation} along $\partial M.$ 

\begin{remark}
\label{r1}
It is convenient to point out that, choosing appropriate coordinates (e.g. harmonic coordinates) we conclude that $f$ and $g$ are analytic, see for instance Theorem 2.8 in \cite{Corvino} (or Proposition 2.1 in \cite{CEM}). Whence, we conclude that $f$ can not vanish identically in a non-empty open set. Then, the set  of regular points is dense in $M.$
\end{remark}

Proceeding, following the notation used in \cite{miaotam}, the second fundamental form of $\partial M$  is given by
\begin{equation}
\label{secff}
h_{ij}=\langle \nabla_{e_{i}}\nu,e_{j}\rangle,
\end{equation} where $\{e_{1},\ldots,e_{n-1}\}$ is  an orthonormal frame on $\partial M.$ Of which implies that

\begin{equation}
\label{segform}h_{ij}=-\langle \nabla_{e_{i}}\frac{\nabla f}{|\nabla f|},e_{j}\rangle=\frac{1}{(n-1)|\nabla f|}g_{ij}.
\end{equation} Hence, the mean curvature is constant and then $\partial M$ is totally umbilical. For more background see also Theorem 7 in \cite{miaotam}.

In the sequel, as a consequence of Lemma \ref{lemdivformula}, we deduce an integral formula, which is useful in the proof of Theorem \ref{thmMain}. 

\begin{lemma}\label{lemB2}
Let $(M^{n},g,f)$ be a compact, oriented, connected Miao-Tam critical metric with smooth boundary $\partial M$ and nonnegative potential function $f.$ Then we have:
\begin{equation}
\label{eqLb}
 \int_{M}f|\mathring{Ric}|^{2}dM_{g}=-\frac{1}{|\nabla f|}\int_{\partial M}\mathring{Ric}(\nabla f,\nabla f)dS.
\end{equation}

\end{lemma}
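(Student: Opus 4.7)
The plan is to reduce the boundary integral to a divergence via a single integration by parts, leveraging the identity $f\mathring{Ric}=\mathring{Hess}\,f$ from (\ref{IdRicHess}) together with the fact that the scalar curvature is constant.

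First I would rewrite the integrand as a divergence. Starting from the pointwise identity $f|\mathring{Ric}|^2=\langle \mathring{Ric},\,f\mathring{Ric}\rangle=\langle \mathring{Ric},\,\mathring{Hess}\,f\rangle$, and using that $\mathring{Ric}$ is trace-free so $\langle \mathring{Ric},\,\mathring{Hess}\,f\rangle=\langle \mathring{Ric},\,Hess\,f\rangle$, I can apply Lemma \ref{lemdivformula} with $T=\mathring{Ric}$, $Z=\nabla f$, and $\phi=1$ to obtain
\begin{equation*}
\operatorname{div}\!\big(\mathring{Ric}(\nabla f)\big)=(\operatorname{div}\mathring{Ric})(\nabla f)+\langle \mathring{Ric},\,Hess\,f\rangle.
\end{equation*}
Because Miao-Tam critical metrics have constant scalar curvature (Theorem 7 of \cite{miaotam}), the twice-contracted second Bianchi identity gives $\operatorname{div}\mathring{Ric}=\tfrac{n-2}{2n}\nabla R=0$, so the first term on the right vanishes. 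Combining this with the previous display yields the clean divergence identity
\begin{equation*}
f|\mathring{Ric}|^2=\operatorname{div}\!\big(\mathring{Ric}(\nabla f)\big).
\end{equation*}

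Next I would integrate over $M$ and invoke the divergence theorem. The outward unit normal along $\partial M$ is $\nu=-\nabla f/|\nabla f|$, since $f$ is nonnegative with $f^{-1}(0)=\partial M$ so $\nabla f$ points into $M$. Therefore
\begin{equation*}
\int_M f|\mathring{Ric}|^2\,dM_g=\int_{\partial M}\langle \mathring{Ric}(\nabla f),\,\nu\rangle\,dS=-\int_{\partial M}\frac{1}{|\nabla f|}\,\mathring{Ric}(\nabla f,\nabla f)\,dS.
\end{equation*}
Finally, since it was shown in the paragraph preceding the lemma that $|\nabla f|$ is constant on $\partial M$, the factor $1/|\nabla f|$ can be pulled outside the boundary integral, which gives exactly (\ref{eqLb}).

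There is no real obstacle here; the only subtle point is the correct sign and the fact that the boundary integral is well defined, which is ensured by $|\nabla f|\neq 0$ along $\partial M$ (this was established in the discussion of regular points at the start of the section). The structural content is simply: start from $f\mathring{Ric}=\mathring{Hess}\,f$, notice that $\operatorname{div}\mathring{Ric}=0$ thanks to constant scalar curvature, and apply Stokes' theorem.
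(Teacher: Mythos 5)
Your proposal is correct and follows essentially the same route as the paper: the authors also apply Lemma \ref{lemdivformula} with $T=\mathring{Ric}$, $Z=\nabla f$, $\phi=1$, use constant scalar curvature to kill $(\operatorname{div}\mathring{Ric})(\nabla f)$, identify $\langle\mathring{Ric},\nabla^2 f\rangle$ with $f|\mathring{Ric}|^2$ via $f\mathring{Ric}=\mathring{Hess}f$, and conclude by Stokes with $\nu=-\nabla f/|\nabla f|$ and the constancy of $|\nabla f|$ on $\partial M$. No discrepancies to report.
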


\begin{proof}
We start choosing $T=\mathring{Ric}$, $Z=\nabla f$ and $\phi=1$ in Lemma \ref{lemdivformula}. So, since $(M^{n},\,g)$ has constant scalar curvature we infer
\begin{eqnarray*}
div(\mathring{Ric}(\nabla f))&=&(div\mathring{Ric})(\nabla f)+\langle\mathring{Ric},\nabla^2 f\rangle\\
 &=& f|\mathring{Ric}|^2.
\end{eqnarray*} Upon integrating over $M$ we use Stokes formula to obtain
\begin{eqnarray*}
\int_M f|\mathring{Ric}|^2dM&=&\int_M div(\mathring{Ric}(\nabla f))dM\\
 &=&\int_{\partial M}\langle\mathring{Ric}(\nabla f),\nu\rangle dS.
\end{eqnarray*} Next, taking into account that $f\geq 0$ we have $\nu=-\frac{\nabla f}{|\nabla f|}.$ From here it follows that
\begin{eqnarray*}
\int_M f|\mathring{Ric}|^2dM=-\frac{1}{|\nabla f|}\int_{\partial M}\mathring{Ric}(\nabla f,\nabla f)dS,
\end{eqnarray*} where we used that  $|\nabla f|$ is constant on $\partial M.$ This finishes the proof of the lemma.
\end{proof}

\begin{remark}
\label{r2}
Notice that if we replace the nonnegative potential function condition by the nonpositive potential function condition in Lemma \ref{lemB2}, (\ref{eqLb}) become 

$$\int_{M}f|\mathring{Ric}|^{2}dM=\frac{1}{|\nabla f|}\int_{\partial M}\mathring{Ric}(\nabla f,\nabla f)dS.$$  
\end{remark}

Proceeding, we use the Gauss Equation (\ref{eqgauss}) to obtain the following lemma.

\begin{lemma}\label{lemRicBoundary}
Let $(M^{n},\,g,\,f)$ be a compact, oriented, connected Miao-Tam critical metric with smooth boundary $\partial M.$ Consider $e_n=-\frac{\nabla f}{|\nabla f|}$, and pick any orthonormal frame $\{e_1,...,e_{n-1}\}$
tangent to the level surface $\partial M.$ Then, on $\partial M,$ we have:
\begin{equation*}
R^{\partial M}_{ij}=R_{ij}-R_{injn}+\frac{n-2}{(n-1)^2|\nabla f|^2}g_{ij},
\end{equation*} where $Ric^{\partial M}$ is the Ricci tensor of $(\partial M, g_{|_{\partial M}}).$
\end{lemma}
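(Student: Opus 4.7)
The plan is to contract the Gauss equation (\ref{eqgauss}) against the induced metric and use the explicit formula for the second fundamental form of the boundary computed in the preliminaries. Because this is really a direct calculation, the whole statement will reduce to carefully carrying out one trace.

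First I would recall from (\ref{segform}) that the boundary of a Miao--Tam critical metric is totally umbilical with
\begin{equation*}
h_{ij} = \frac{1}{(n-1)|\nabla f|}\,g_{ij},
\end{equation*}
so in particular the mean curvature is $H = \mathrm{tr}(h) = \dfrac{1}{|\nabla f|}$, and $|\nabla f|$ is constant along $\partial M$ (see Remark \ref{r1} and the discussion preceding it). This is the only nontrivial input we need about Miao--Tam metrics; the rest is pure submanifold geometry.

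Next, with the orthonormal frame $\{e_1,\dots,e_{n-1}\}$ tangent to $\partial M$ and $e_n = -\nabla f/|\nabla f|$ normal, I would trace the Gauss equation (\ref{eqgauss}) over the indices $i,k$ from $1$ to $n-1$ to obtain the boundary Ricci tensor. Since $R_{jl} = \sum_{i=1}^{n} R_{ijil}$, separating out the $i=n$ term gives
\begin{equation*}
\sum_{i=1}^{n-1} R_{ijil} \;=\; R_{jl} - R_{njnl}.
\end{equation*}
The umbilicity relation then yields, for $j,l$ tangent,
\begin{equation*}
\sum_{i=1}^{n-1} h_{il}h_{ij} = \frac{1}{(n-1)^2|\nabla f|^2}\,g_{jl},
\qquad
\sum_{i=1}^{n-1} h_{ii}\,h_{jl} = \frac{1}{(n-1)|\nabla f|^2}\,g_{jl}.
\end{equation*}

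Combining these into the contracted Gauss equation gives
\begin{equation*}
R^{\partial M}_{jl} = R_{jl} - R_{njnl} + \left(\frac{1}{(n-1)|\nabla f|^2} - \frac{1}{(n-1)^2|\nabla f|^2}\right)g_{jl},
\end{equation*}
and simplifying the scalar coefficient to $\dfrac{n-2}{(n-1)^2|\nabla f|^2}$ gives the claimed identity. There is no real obstacle here beyond keeping the sign conventions of Gauss equation (\ref{eqgauss}) consistent and remembering that $|\nabla f|$ is constant along $\partial M$ so that it can be pulled out of all expressions; the result is immediate once the umbilical form of $h$ is substituted.
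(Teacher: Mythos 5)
Your proposal is correct and follows essentially the same route as the paper: trace the Gauss equation (\ref{eqgauss}) over a pair of tangential indices, substitute the umbilical form $h_{ij}=\frac{1}{(n-1)|\nabla f|}g_{ij}$ from (\ref{segform}), and simplify the coefficient to $\frac{n-2}{(n-1)^2|\nabla f|^2}$. The computation and the inputs used are identical to the paper's proof.
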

\begin{proof}
By Gauss Equation (\ref{eqgauss}), for $1\leq i,j\leq n-1,$ we get
\begin{eqnarray*}
R^{\partial M}_{ij}&=&g^{kl}R^{\partial M}_{ikjl}\\
 &=&g^{kl}\left(R_{ikjl}-h_{il}h_{kj}+h_{ij}h_{kl}\right).
\end{eqnarray*} Whence, we use (\ref{segform}) to infer
\begin{eqnarray*}
R^{\partial M}_{ij}&=&R_{ij}-R_{injn}-\frac{1}{(n-1)^2|\nabla f|^2}g^{kl}g_{il}g_{kj}+\frac{1}{(n-1)^2|\nabla f|^2}g^{kl}g_{ij}g_{kl}\\
 &=&R_{ij}-R_{ninj}-\frac{1}{(n-1)^2|\nabla f|^2}g_{ij}+\frac{1}{(n-1)|\nabla f|^2}g_{ij}\\
 &=&R_{ij}-R_{injn}+\frac{n-2}{(n-1)^2|\nabla f|^2}g_{ij},
\end{eqnarray*} as we wanted to prove.
\end{proof}

We remember that the standard metrics on geodesic balls in space forms are Miao-Tam critical metrics. So, it is natural to ask if they are the only examples taking into account the boundary condition. In this sense, Miao and Tam obtained a classification for critical metrics of the volume functional on manifolds with connected boundary and nonpositive sectional curvature (cf. Corollary 3 in \cite{miaotam}). In the sequel, we obtain a general classification for Miao-Tam critical metrics without assumption on sectional curvature, which can be compared with Corollary 3 in \cite{miaotam}.

\begin{proposition}\label{propC}
Let $(M^n,\,g,\,f)$ be a connected Miao-Tam critical metric with connected boundary $\partial M$ and nonnegative potential function. Suppose that the scalar curvature of $(\partial M, g_{|_{\partial M}})$ is constant. Then $\mathring{Ric}(\nabla f,\nabla f)$ is a nonpositive constant along $\partial M.$ In particular, $\mathring{Ric}(\nabla f,\nabla f)=0$ if and only if $(M^n,\,g)$ is isometric to a geodesic ball in a simply connected space form $\Bbb{R}^{n},$ $\Bbb{H}^{n},$ or $\Bbb{S}^n.$
\end{proposition}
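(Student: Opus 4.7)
The plan is to exploit two facts in tandem: Lemma \ref{lemRicBoundary}, which relates the intrinsic scalar curvature of $\partial M$ to the ambient Ricci tensor on the boundary, and the integral identity from Lemma \ref{lemB2}. Combining them pins down both the constancy of $\mathring{Ric}(\nabla f,\nabla f)$ along $\partial M$ and its sign, while the rigidity case is then handled by Theorem \ref{thmMT}.

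First I would trace Lemma \ref{lemRicBoundary} over the $(n-1)$ tangential directions to obtain an expression for the intrinsic scalar curvature $R^{\partial M}$ in terms of $R$, $R_{nn}$ and $|\nabla f|^{2}$, where $e_n=-\nabla f/|\nabla f|$. Using that $R_{nn}=|\nabla f|^{-2}\mathrm{Ric}(\nabla f,\nabla f)$ and that $\mathring{Ric}(\nabla f,\nabla f)=\mathrm{Ric}(\nabla f,\nabla f)-\frac{R}{n}|\nabla f|^{2}$, this reduces to a relation of the schematic form
\begin{equation*}
R^{\partial M}=\frac{(n-2)R}{n}-\frac{2}{|\nabla f|^{2}}\mathring{Ric}(\nabla f,\nabla f)+\frac{n-2}{(n-1)|\nabla f|^{2}}.
\end{equation*}
Since $R$ is constant on $M$ (equation \eqref{eqtrace}) and, as recalled at the start of Section 3, $|\nabla f|$ is constant along $\partial M$, the hypothesis that $R^{\partial M}$ is constant forces $\mathring{Ric}(\nabla f,\nabla f)$ to be constant along $\partial M$.

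Next I would invoke Lemma \ref{lemB2} to turn a sign into a sign: since $f\ge 0$ on $M$, the left hand side of \eqref{eqLb} is nonnegative, so
\begin{equation*}
-\frac{1}{|\nabla f|}\int_{\partial M}\mathring{Ric}(\nabla f,\nabla f)\,dS\ge 0.
\end{equation*}
Because the integrand is now constant on $\partial M$, this immediately yields $\mathring{Ric}(\nabla f,\nabla f)\le 0$ pointwise on $\partial M$.

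For the rigidity statement, if $\mathring{Ric}(\nabla f,\nabla f)=0$ on $\partial M$, then \eqref{eqLb} gives $\int_{M} f|\mathring{Ric}|^{2}\,dM=0$; since $f>0$ in the interior of $M$ (and $g$, $f$ are analytic, as noted in Remark \ref{r1}), we conclude $\mathring{Ric}\equiv 0$ on $M$, i.e.\ $(M^{n},g)$ is Einstein. Theorem \ref{thmMT} then identifies $M$ with a geodesic ball in $\mathbb{R}^n$, $\mathbb{H}^n$ or $\mathbb{S}^n$. The converse is immediate since the space-form examples are Einstein. The only delicate point is the bookkeeping in the traced Gauss equation of Lemma \ref{lemRicBoundary} (keeping the $R_{injn}$ and $g_{ij}$ contributions straight), but there is no real obstacle beyond this routine computation.
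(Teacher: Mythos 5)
Your proposal is correct and follows essentially the same route as the paper: trace the Gauss-equation identity of Lemma \ref{lemRicBoundary} to get constancy of $\mathring{Ric}(\nabla f,\nabla f)$ along $\partial M$, use Lemma \ref{lemB2} with $f\ge 0$ for the sign, and conclude rigidity via Einstein plus Theorem \ref{thmMT}. The only cosmetic difference is that you phrase the traced identity directly in terms of $\mathring{Ric}(\nabla f,\nabla f)$ rather than $R_{nn}$, which is equivalent.
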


\begin{proof} Taking the trace in Lemma \ref{lemRicBoundary} we achieve
\begin{eqnarray*}
R^{\partial M}&=&g^{ij}R^{\partial M}_{ij}\\
 &=&g^{ij}\left(R_{ij}-R_{injn}+\frac{n-2}{(n-1)^2|\nabla f|^2}g_{ij}\right)\\
 &=&R-R_{nn}-R_{nn}+\frac{n-2}{(n-1)|\nabla f|^2},
\end{eqnarray*} where $R^{\partial M}$ stands for the scalar curvature of $(\partial M,g_{|_{\partial M}}).$ From this it follows that $$2R_{nn}+R^{\partial M}=R+\frac{n-2}{(n-1)|\nabla f|^2}.$$ See also Eq. (45) in \cite{miaotam}. 

Next, since $R^{\partial M}$ and $|\nabla f|$ are constant along $\partial M$ we deduce that $R_{nn}$ is constant along $\partial M.$ Of which we deduce that $Ric(\nabla f,\nabla f)$ is also constant along $\partial M.$ Taking into account that $M$ has constant scalar curvature, we immediately conclude that $\mathring{Ric}(\nabla f,\nabla f)$ is constant along $\partial M.$

Proceeding, we use Lemma \ref{lemB2} to infer
\begin{eqnarray*}
0&\leq&\int_M f|\mathring{Ric}|^2dM\\
 &=&-\frac{1}{|\nabla f|}\int_{\partial M}\mathring{Ric}(\nabla f,\nabla f)dS\\
 &=&-\frac{1}{|\nabla f|}\mathring{Ric}(\nabla f,\nabla f)|\partial M|.
\end{eqnarray*} Whence, we get $\mathring{Ric}(\nabla f,\nabla f)\leq0.$

On the other hand, by supposing $\mathring{Ric}(\nabla f,\nabla f)=0,$ we can apply Lemma \ref{lemB2} to obtain $$\int_Mf|\mathring{Ric}|dM_g = 0.$$ Since $f\geq0$ we infer $\mathring{Ric}\equiv0$ 
on $M\setminus\partial M.$ Clearly, by continuity, $\mathring{Ric}\equiv0$ in $M.$ This implies that $(M^n,\,g)$ is Einstein. Now, it suffices to apply Theorem \ref{thmMT} (see Theorem 1.1 in \cite{miaotamTAMS}) to conclude that $(M^n,\,g)$ is isometric to a geodesic ball in a simply connected space form $\Bbb{R}^{n},$ $\Bbb{H}^{n},$ or $\Bbb{S}^n.$ 

The converse statement is straightforward. Thereby, we finish the proof of the proposition.
\end{proof}

\begin{remark}
From Remark \ref{r2}, if we replace the nonnegative potential function condition by the nonpositive potential function condition the conclusion of Proposition \ref{propC} is exactly the same
\end{remark}

\section{Proof of the main results}

\subsection{Proof of Theorem \ref{thmMain}}

\begin{proof}
First of all, we already know that $M$ has constant scalar curvature. Therefore, if $M$ has null scalar curvature we may use the weak maximum principle \cite{GT}, jointly with (\ref{eqtrace}), to conclude that the boundary condition $f^{-1}(0)=\partial M$ implies that $f$ is positive in $M$ (see also Theorem 7 in \cite{miaotam}). But, if $M$ has positive scalar curvature, the boundary condition implies that $f$ does not change of signal. So, we start assuming that $f$ is nonnegative. In that case, we may use Lemma \ref{lemB2} to arrive at

\begin{eqnarray}
\label{eqK}
\int_{M}f|\mathring{Ric}|^{2}dM&=&-\frac{1}{|\nabla f|}\int_{\partial M}\left(Ric(\nabla f,\nabla f)-\frac{R}{3}|\nabla f|^2\right)dS\nonumber\\
 &=&-\frac{1}{|\nabla f|}\int_{\partial M}Ric(\nabla f,\nabla f)dS+\frac{R}{3}|\nabla f||\partial M|.
\end{eqnarray}
Now by Gauss Equation (\ref{eqgauss}) as well as (\ref{segform}) we deduce
\begin{eqnarray}
\label{G2}K&=&R_{1212}-h_{12}h_{21}+h_{11}h_{22}\nonumber\\
 &=&R_{1212}+\frac{1}{4|\nabla f|^{2}},
\end{eqnarray} where $K$ is the sectional curvature of $\partial M.$ From this it follows that

\begin{equation*}
R_{11}=R_{1212}+R_{1313}=R_{1313}+K-\frac{1}{4|\nabla f|^{2}},
\end{equation*}

\begin{equation*}
R_{22}=R_{2323}+K-\frac{1}{4|\nabla f|^{2}}
\end{equation*} and

\begin{equation*}
R_{33}=R_{1313}+R_{2323}.
\end{equation*} Hence, we immediately obtain

\begin{eqnarray*}
R&=&R_{11}+R_{22}+R_{33}\\
 &=&2R_{1313}+2R_{2323}+2K-\frac{2}{4|\nabla f|^{2}},
\end{eqnarray*} which can rewritten succinctly as
\begin{equation}
\label{R1323}R_{1313}+R_{2323}=\frac{R}{2}-K+\frac{1}{4|\nabla f|^{2}}.
\end{equation}

Easily one verifies that 
\begin{eqnarray}
Ric(\nabla f,\nabla f)&=&|\nabla f|^{2}R_{33}\nonumber\\
 &=&|\nabla f|^{2}(R_{1313}+R_{2323}).
\end{eqnarray}  This combined with (\ref{R1323}) yields

\begin{equation}
\label{Rab}
Ric(\nabla f,\nabla f)=\frac{R}{2}|\nabla f|^{2}-K|\nabla f|^{2}+\frac{1}{4}.
\end{equation}

Upon integrating (\ref{Rab}) we get

\begin{equation}
\label{e34}
\frac{1}{|\nabla f|}\int_{\partial M}Ric(\nabla f,\nabla f) dS=\frac{R}{2}|\nabla f||\partial M|-|\nabla f|\int_{\partial M}K dS+\frac{1}{4|\nabla f|}|\partial M|.
\end{equation} Then, substituting (\ref{e34}) into (\ref{eqK}) we obtain

\begin{eqnarray*}
\int_{M}f|\mathring{Ric}|^{2}dM &=&-\frac{R}{2}|\nabla f||\partial M|+|\nabla f|\int_{\partial M}K dS\nonumber\\&&-\frac{1}{4|\nabla f|}|\partial M|+\frac{R}{3}|\nabla f||\partial M|.
\end{eqnarray*} From this it follows that

\begin{eqnarray}
\label{intK2}\int_{\partial M}KdS&=&\frac{1}{|\nabla f|}\int_{M}f|\mathring{Ric}|^{2}dM+\left(\frac{R}{6}+\frac{1}{4|\nabla f|^2}\right)|\partial M|\nonumber\\&\ge&\left(\frac{R}{6}+\frac{1}{4|\nabla f|^2}\right)|\partial M|.
\end{eqnarray} Thereby, since that the scalar curvature of $M$ is nonnegative we immediately deduce

$$\int_{\partial M}K dS>0.$$ Therefore, by using the Gauss-Bonnet theorem we conclude that $\partial M$ is a $2$-sphere.

Hereafter, we notice that $C(R)=\frac{R}{6}+\frac{1}{4|\nabla f|^2}$ is a positive constant in (\ref{intK2}). So, since $\partial M$ is a $2$-sphere we may use again Gauss-Bonnet theorem, together with (\ref{intK2}), to infer
\begin{equation*}
|\partial M|\leq\frac{4\pi}{C(R)}.
\end{equation*} In particular, from (\ref{intK2}) the equality holds if only if $\int_M f|\mathring{Ric}|^2 dM=0.$ This forces $(M^{3},\,g)$ to be Einstein. Now, we may invoke Theorem \ref{thmMT} (see also Theorem 1.1 in \cite{miaotamTAMS}) to conclude that $(M^3,\,g)$ is isometric to a geodesic ball in a simply connected space form $\Bbb{R}^{3}$ or $\Bbb{S}^3.$ This is what we wanted to prove. 

It remains to analyze the case when $f$ is nonpositive. In this case, from the proof of Lemma \ref{lemB2} (see also Remark \ref{r2}), it is not difficult to show that

$$\int_{M}f|\mathring{Ric}|^{2}dM=\frac{1}{|\nabla f|}\int_{\partial M}Ric(\nabla f,\nabla f)dS -\frac{R}{3}|\nabla f||\partial M|.$$  

From now on the proof looks like that one of the previous case. In particular, we arrive at 

\begin{equation}
\int_{\partial M}KdS=-\frac{1}{|\nabla f|}\int_{M}f|\mathring{Ric}|^{2}dM+\left(\frac{R}{6}+\frac{1}{4|\nabla f|^2}\right)|\partial M|.
\end{equation} Taking into account that $f$ is nonpositive we achieve

$$\label{intK}\int_{\partial M}K dS\ge\left(\frac{R}{6}+\frac{1}{4|\nabla f|^2}\right)|\partial M|.$$ To conclude, it suffices to follow the arguments applied in the final steps of the first case.  So, the proof is completed.

\end{proof}

\subsection{Proof of Theorem \ref{thmtypebochener}}
\begin{proof}
We start recalling that, from Theorem 7 in \cite{miaotam}, $M^3$ has constant scalar curvature. Next, notice that

\begin{eqnarray}
\frac{1}{2}div(f\nabla|\ric|^2) & = & \frac{1}{2}\langle \nabla f, \nabla |\mathring{Ric}|^2 \rangle + \frac{f}{2}\Delta |\mathring{Ric}|^2.
\end{eqnarray} It is not difficult to check that $$\langle \nabla f, \nabla |\mathring{Ric}|^{2}\rangle=2\langle \nabla_{\nabla f}\mathring{Ric},\mathring{Ric}\rangle$$ and 

\begin{eqnarray*}
\Delta |\mathring{Ric}|^{2}&=&g^{ik}\nabla_{i}\nabla_{k}\big(\langle\mathring{Ric},\mathring{Ric}\rangle\big)\nonumber\\&=&2g^{ik}\big(\langle \nabla_{i}\nabla_{k}\mathring{Ric},\mathring{Ric}\rangle+\langle \nabla_{k}\mathring{Ric},\nabla_{i}\mathring{Ric}\rangle\big).
\end{eqnarray*} Of which we deduce
\begin{equation}
\label{eqtypebochener}
\frac{1}{2}div(f\nabla|\ric|^2) =\langle \nabla_{\nabla f}\mathring{Ric},\mathring{Ric}\rangle+f|\nabla \mathring{Ric}|^{2}+f\langle\Delta \mathring{Ric},\mathring{Ric}\rangle.
\end{equation}

On the other hand, computing the Laplacian of (\ref{IdRicHess}) we obtain
\begin{equation}
(\Delta \hess)_{ij} = (\Delta f)\rc_{ij} + f(\Delta \ric)_{ij} + 2 \langle \nabla \rc_{ij}, \nabla f \rangle.
\end{equation} From this it follows that
\begin{equation}\label{eqlaplacric}
f(\Delta \ric)_{ij} = (\Delta \hess)_{ij} + \frac{R}{2}f\rc_{ij} + \frac{3}{2}\rc_{ij} - 2 \langle \nabla \rc_{ij}, \nabla f \rangle.
\end{equation}

We remember that in dimension three $W\equiv 0.$ This data substituted into (\ref{weyl}) yields

\begin{equation}\label{curvature}
R_{ikjp} = (R_{ij}g_{kp} + R_{kp}g_{ij} - R_{ip}g_{kj} - R_{kj}g_{ip}) - \frac{R}{2}(g_{ij}g_{kp} - g_{ip}g_{kj}).
\end{equation}

Next, we compute the value of $(\Delta \hess)_{ij}.$ To do so, we use Lemma \ref{commutator} to obtain
\begin{eqnarray}
 \label{laplacianohess}
(\Delta \nabla^2 f)_{ij} & = & \nabla^2_{ij} \Delta f + (R_{jp}g_{ik} + R_{ip}g_{jk} - 2R_{ikjp})\nabla^k\nabla^p f \nonumber\\&&+ (\nabla_i R_{jp} + \nabla_j R_{pi} - \nabla_p R_{ij})\nabla^p f \nonumber\\&=& -\frac{R}{2}\nabla_i\nabla_j f + (\nabla_i R_{jp} + \nabla_jR_{pi} - \nabla_pR_{ij}) \nabla^pf + II,
\end{eqnarray} 
where
\begin{eqnarray}
\label{eqIImagic}
II & = & (R_{jp}g_{ik} + R_{ip}g_{jk} - 2R_{ikjp})\nabla^k\nabla^p f.
\end{eqnarray}

We shall treat $II$ separately. Indeed, substituting (\ref{eqMiaoTam1}) into (\ref{eqIImagic}) we arrive at

\begin{eqnarray*}
II & = & fR_{jp}R_{i}^{p}+fR_{ip}R_{j}^{p}-2fR^{kp}R_{ikjp}.
\end{eqnarray*} This combined with (\ref{curvature}) yields

\begin{eqnarray*}
II & = &  fR_{jp}R_{i}^{p}+fR_{ip}R_{j}^{p}\nonumber\\&&-2fR^{kp}\big(R_{ij}g_{kp} + R_{kp}g_{ij} - R_{ip}g_{kj} - R_{kj}g_{ip}- \frac{R}{2}g_{ij}g_{kp} + \frac{R}{2}g_{ip}g_{kj}\big).
\end{eqnarray*} Rearranging the terms we infer

\begin{eqnarray}
\label{eqhjg}
II & = & 3fR_{jp}R_{i}^{p}+3fR_{ip}R_{j}^{p}-3fRR_{ij}-2f|Ric|^{2}g_{ij}+fR^{2}g_{ij}.
\end{eqnarray} Now, substituting the traceless Ricci tensor $\mathring{Ric}=Ric-\frac{R}{3}g$ into (\ref{eqhjg}) we achieve

\begin{eqnarray*}
II & = & 3f\big(\rc_{jp}+\frac{R}{3}g_{jp}\big)\big(\rc_{i}^{p}+\frac{R}{3}g_{i}^{p}\big)+ 3f\big(\rc_{ip}+\frac{R}{3}g_{ip}\big)\big(\rc_{j}^{p}+\frac{R}{3}g_{j}^{p}\big)\nonumber\\&&-3fR\big(\rc_{ij}+\frac{R}{3}g_{ij}\big)-2f|Ric|^{2}g_{ij}+fR^{2}g_{ij}\nonumber\\&=&3f\rc_{jp}\rc_i^p + 3f\rc_{ip}\rc_j^p + R f \rc_{ij}+\frac{2}{3}fR^{2}g_{ij} -2f|Ric|^2 g_{ij}.
\end{eqnarray*} Hence, taking into account that $|\mathring{Ric}|^{2}=|Ric|^{2}-\frac{R^{2}}{3},$ we may written $II$ succinctly as

\begin{eqnarray*}
II & = & 3f\rc_{jp}\rc_i^p + 3f\rc_{ip}\rc_j^p + R f \rc_{ij} -2f|\ric|^2 g_{ij}.
\end{eqnarray*}

At the same time, returning to Eq. (\ref{laplacianohess}) we may write

\begin{eqnarray*}
(\Delta \hess)_{ij} & = & (\Delta \nabla^2 f)_{ij} - \frac{1}{3}\Delta(\Delta f)g_{ij} \\
& = & -\frac{R}{2}\nabla_i\nabla_j f + 3f(\rc_{jp}\rc^p_i + \rc_{ip}\rc^p_j) + R f\rc_{ij} \\
& & - 2f|\ric|^2g_{ij} + (\nabla_i \rc_{jp} + \nabla_j \rc_{pi} - \nabla_p \rc_{ij})\nabla^p f \nonumber\\&&- \frac{1}{3}\Delta(-\frac{R}{2} f -\frac{3}{2}) g_{ij},
\end{eqnarray*} and using that $\mathring{Hess}f=Hess\,f-\frac{\Delta f}{3}g$ we immediately obtain
\begin{eqnarray*}
(\Delta \hess)_{ij}& = & -\frac{R}{2}((\hess)_{ij} + \frac{\Delta f}{3})g_{ij}+ 3f(\rc_{jp}\rc^p_i + \rc_{ip}\rc^p_j) +Rf\rc_{ij} -2f|\ric|^2g_{ij}  \\
& & + (\nabla_i \rc_{jp} + \nabla_j \rc_{pi} - \nabla_p \rc_{ij})\nabla^p f + \frac{R}{6}\Delta f g_{ij}.
\end{eqnarray*} Next, taking into account that $f\mathring{Ric}=\mathring{Hess} f$ we get
\begin{eqnarray}\label{laplacianhesswithouttrace}
(\Delta \hess)_{ij} &=& \frac{R}{2}f \rc_{ij} + 3f(\rc_{jp}\rc^p_i + \rc_{ip}\rc^p_j) -2f|\ric|^2g_{ij}\nonumber\\&& + (\nabla_i \rc_{jp} + \nabla_j \rc_{pi} - \nabla_p \rc_{ij})\nabla^p f.
\end{eqnarray} Therefore, plugging (\ref{laplacianhesswithouttrace}) into (\ref{eqlaplacric}) we arrive at

\begin{eqnarray*}
f(\Delta \ric)_{ij} & = & (\Delta \hess)_{ij} + \frac{R}{2}f\rc_{ij} + \frac{3}{2}\rc_{ij} - 2 \langle \nabla \rc_{ij}, \nabla f \rangle\\
& = &   Rf\rc_{ij} + \frac{3}{2}\rc_{ij}  -2f|\ric|^2g_{ij} + 3f(\rc_{jp}\rc^p_i + \rc_{ip}\rc^p_j)\\
& & + (\nabla_i \rc_{jp} + \nabla_j \rc_{pi} - \nabla_p \rc_{ij})\nabla^p f - 2\nabla_p\rc_{ij}\nabla^p f.
\end{eqnarray*} This implies that

\begin{eqnarray}
\label{34g}
f(\Delta \ric)_{ij} & = & R f\rc_{ij} + \frac{3}{2}\rc_{ij} -2f|\ric|^2g_{ij} + 3f(\rc_{jp}\rc^p_i + \rc_{ip}\rc^p_j)\nonumber\\&& + (\nabla_i \rc_{jp} + \nabla_j \rc_{pi} -3 \nabla_p \rc_{ij})\nabla^p f.
\end{eqnarray}

Returning to Eq. (\ref{eqtypebochener}), we may use (\ref{34g}) to achieve

\begin{eqnarray*}
\frac{1}{2}div(f\nabla|\ric|^2) & = & (\nabla_k \rc_{ij}) \rc^{ij} \nabla^k f + f |\nabla \ric|^2 \\
& & +\Big[Rf\rc_{ij} + \frac{3}{2}\rc_{ij} - 2f|\ric|^2 g_{ij} + 3f(\rc_{jp}\rc^p_i + \rc_{ip}\rc^p_j)\nonumber\\&& + (\nabla_i \rc_{jp} + \nabla_j \rc_{pi} -3 \nabla_p \rc_{ij})\nabla^p f \Big]\rc^{ij} \\
& = & -2(\nabla_p \rc_{ij}) \rc^{ij} \nabla^p f + 2(\nabla_i \rc_{jp}) \rc^{ij} \nabla^p f \nonumber\\&& + f |\nabla \ric|^2 + Rf|\ric|^2 + \frac{3}{2}|\ric|^2 + 6f tr(\ric^3)\\
& = & 2(\nabla_i \rc_{pj} - \nabla_p \rc_{ij}) \rc^{ij} \nabla^p f + f |\nabla \ric|^2\nonumber\\&& + Rf|\ric|^2 + \frac{3}{2}|\ric|^2 + 6f tr(\ric^3).
\end{eqnarray*} Thereby, since $M$ has constant scalar curvature, it follows from (\ref{cotton}) that

\begin{eqnarray*}
\frac{1}{2}div(f\nabla|\ric|^2) &=&  -2C_{pij}\rc^{ij}\nabla^pf + f |\nabla \ric|^2 + Rf|\ric|^2\nonumber\\&& + \frac{3}{2}|\ric|^2 + 6f tr(\ric^3).
\end{eqnarray*} Finally, it suffices to use Lemma \ref{lemacotton} to deduce

\begin{eqnarray}
\frac{1}{2}div(f\nabla|\ric|^2)& =& f \frac{|C|^2}{2} + f |\nabla \ric|^2 + Rf|\ric|^2 \nonumber\\&&+ \frac{3}{2}|\ric|^2 + 6f \big(tr(\ric^3)\big).
\end{eqnarray} This finishes the proof of the theorem.
\end{proof}

\subsection{Proof of Corollary \ref{corthmtypebochener}}
\begin{proof} We start remembering the classical Okumura's Lemma (cf. Lemma 2.1 in \cite{okumura}):
\begin{equation}
tr(\ric^3) \geq - \frac{1}{\sqrt{6}}|\ric|^3.
\end{equation} Therefore, we may use Theorem \ref{thmtypebochener} to infer
\begin{eqnarray*}
\frac{1}{2}div(f\nabla|\ric|^2) &\geq& \big(|\nabla \ric|^2 + \frac{|C|^2}{2}\big)f + \big(R - \sqrt{6}|\ric|\big)|\ric|^2 f \nonumber\\&&+ \frac{3}{2}|\ric|^2.
\end{eqnarray*} Upon integrating of the above expression over $M$ we use our assumption to deduce that $(M^3,\,g)$ is Einstein. Now, we are in position to apply Theorem \ref{thmMT} (cf. Theorem 1.1 in \cite{miaotamTAMS}) to conclude that $M^3$ is isometric to a geodesic ball in $\mathbb{S}^3.$ 

This is what we wanted to prove.
\end{proof}

\begin{acknowledgement}
The authors want to thank A. Barros for helpful conversations about this subject. Moreover, the authors want to thank the referee for his careful reading and valuable suggestions. 
\end{acknowledgement}

\end{document}